\newcommand{\Z}{\mathbb{Z}}
\newcommand{\Ga}{\Gamma}
\newcommand{\del}{\delta}
\newcommand{\ep}{\epsilon}
\newcommand{\ol}{\overline}
\newcommand{\id}{{\rm{id}}}
\theoremstyle{plain}
\newtheorem{thm}{Theorem}[section]
\theoremstyle{definition}
\newtheorem{prob}[thm]{Problem}
\begin{document}

\title[On doubly minimal systems and a question regarding product recurrence]
{On doubly minimal systems and a question regarding product recurrence}

\begin{abstract}
We show that a doubly minimal system $X$ has the property that
for every minimal system $Y$ the orbit closure of any pair $(y,x) \in Y \times X$
is either $Y \times X$ or it has the form 
$\Ga_\pi = \{(\pi(x),x) : x \in X\}$ for some factor map $\pi: X \to Y$.
As a corollary we resolve a problem of Haddad and Ott from 2008 regarding product recurrence.
\end{abstract}

\author{Eli Glasner and Benjamin Weiss}

\address{Department of Mathematics\\
     Tel Aviv University\\
         Tel Aviv\\
         Israel}
\email{glasner@math.tau.ac.il}

\address {Institute of Mathematics\\
 Hebrew University of Jerusalem\\
Jerusalem\\
 Israel}
\email{weiss@math.huji.ac.il}

\thanks{{\it 2010 Mathematics Subject Classification.}
Primary 37B05, 37B20, 54H20}


\keywords{Product recurrence, distal point, double minimality, disjointness}

\date{August 3, 2015}

\maketitle

In this note a {\em dynamical system} is a pair $(X,T)$ where $X$ is a compact metric space
and $T$ a self homeomorphism. The reader is referred to \cite{F} for most of the notions 
used below and for the necessary background.

In \cite[Theorem 9.11, p. 181]{F} Furstenberg has shown that a point $x$ of a dynamical system $(X,T)$
is {\em product-recurrent} (i.e. has the property that for every dynamical system
$(Y,S)$ and a recurrent point $y \in Y$, the pair
$(x,y)$ is a recurrent point of the product system $X \times Y$) if and only if
it is a distal point (i.e. a point which is proximal only to itself). 
In \cite{AF}  Auslander and Furstenberg posed the following question: if 
$(x,y)$ is recurrent for all {\bf minimal} points $y$, is $x$ necessarily a distal point? 
Such a point $x$ is called a {\em weakly product recurrent point}.
This question is answered in the negative in \cite{HO}. 

It turns out (see also \cite[Theorem 4.3]{DSY}) that a negative answer was already at hand for 
Harry Furstenberg when he and Joe Auslander posed
this question. In fact, many years before he proved a theorem according to which an F-flow
is disjoint from every minimal system \cite{F}. 
As a direct consequence of this theorem, if $X$ is an F-flow, $x$ a transitive point in $X$, 
$Y$ any minimal system and $y$ any point in $Y$, then the pair $(x,y)$ has a dense orbit
in $X \times Y$. In particular $(x,y)$ is a recurrent point of the product system $X \times Y$.
Thus a transitive point $x$ in an F-flow is weakly product recurrent.
Since such a point  is  never distal, one concludes that
$x$ is indeed weakly product recurrent but not distal.

\vspace{.5cm}

In \cite[Question 5.3]{HO} the authors pose the following natural question:

\begin{prob}\label{prob}
Is every {\bf minimal} weakly product recurrent point a distal point?
\end{prob}
(This was also repeated in \cite[Question 9.2]{DSY}.)

\vspace{.5cm}

In this note we  show that, here again, the answer is negative.
The example is based on a result of \cite{FKS} concerning
POD systems and on the existence of doubly minimal systems
(see \cite{K} and \cite{W}). 
A minimal dynamical system $(X,T)$
is called {\em proximal orbit dense (POD)} if it
is totally minimal and for any distinct points
$u$ and $v$ in $X$, there exists an $0 \not = n \in \Z $ such that
$\Ga_n = \{( T^nx , x) : x \in X\}$ is contained in 
$\ol{\mathcal{O}_{T \times T}(u,v)}$, the orbit closure of $(u,v)$ in 
the product system $X \times X$.

A minimal $(X,T)$ is called {\em doubly minimal} \cite{W}
(or a system having {\em topologically minimal self joinings in the sense of del Junco} \cite{K}) 
if the only orbit closures of $T\times T$ in $X \times X$ are the graphs
$\Ga_m = \{(T^m x,x): x \in X \},\ m \in \Z$  and all of $X \times X$.
Clearly a doubly minimal system is POD.
In \cite{FKS} the authors prove the following striking property of POD systems:

\begin{thm}
If $(Y,S)$ is POD then any minimal $(X,T)$ that is not an extension of $(Y,S)$ is disjoint
from it. 
\end{thm}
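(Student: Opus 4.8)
The plan is to prove disjointness in its minimal-systems form: since both factors are minimal, $(X,T)$ and $(Y,S)$ are disjoint if and only if the product $X \times Y$ is itself minimal, because any minimal subset of $X \times Y$ automatically projects onto both coordinates and is therefore a joining. So it suffices to show that every minimal subset $M \subseteq X \times Y$ equals $X \times Y$, under the assumption that $(X,T)$ is not an extension of $(Y,S)$. Write $q : M \to X$ and $p : M \to Y$ for the two coordinate projections; both are factor maps onto minimal systems, hence surjective. The strategy is to prove that if $M$ is a \emph{proper} minimal subset then $q$ must be injective; being a continuous bijection of compact spaces it is then an isomorphism, and $p \circ q^{-1} : X \to Y$ is a factor map, contradicting the hypothesis that $X$ is not an extension of $Y$.

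Next I would analyze the failure of injectivity of $q$. Suppose $q$ is not injective. Since $q(x,y)=x$, this means there are points $(x,y_1),(x,y_2)\in M$ with the \emph{same} first coordinate and $y_1\neq y_2$. Form the fibre product $R_q = \{((x,y_1),(x,y_2)) : (x,y_1),(x,y_2)\in M\}$, a closed $(T\times S)\times(T\times S)$-invariant subset of $M\times M$, and push it to $Y\times Y$ by $\rho((x,y_1),(x,y_2))=(y_1,y_2)$. The image $R=\rho(R_q)$ is a closed, $S\times S$-invariant subset of $Y\times Y$ that contains the off-diagonal point $(y_1,y_2)$. This is exactly where the POD hypothesis on $(Y,S)$ enters: applying it to the distinct points $y_1,y_2$ produces some $0\neq n\in\Z$ with $\{(S^n y, y): y\in Y\}\subseteq \overline{\mathcal{O}_{S\times S}(y_1,y_2)}\subseteq R$.

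I would then convert this full graph inclusion into a rigidity statement about $M$. The inclusion says that for every $y\in Y$ there is some $x\in X$ with both $(x,y)\in M$ and $(x,S^n y)\in M$; hence the set $B=\{(x,y)\in M : (x,S^n y)\in M\}$ is a nonempty, closed, $(T\times S)$-invariant subset of $M$, so by minimality of $M$ we get $B=M$. Equivalently, $M$ is invariant under the commuting homeomorphism $\mathrm{id}_X\times S^n$, and each fibre $M_x=\{y:(x,y)\in M\}$ is a nonempty closed $S^n$-invariant subset of $Y$. Total minimality of $(Y,S)$ — part of the POD assumption — makes $(Y,S^n)$ minimal for $n\neq 0$, forcing $M_x=Y$ for every $x$, i.e.\ $M=X\times Y$. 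Thus a proper minimal $M$ cannot have $q$ non-injective, completing the argument.

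The main obstacle is the central bootstrapping step: extracting genuine structure from a single off-diagonal coincidence. The POD property is what lets one pass from one off-diagonal pair $(y_1,y_2)$ in the invariant relation $R$ to an \emph{entire} graph $\{(S^n y,y)\}$ sitting inside $R$; without this the invariance of $M$ under $\mathrm{id}_X\times S^n$ (and hence its collapse to $X\times Y$) cannot be obtained. The remaining work is bookkeeping — checking that $R_q$, $\rho(R_q)$ and $B$ are closed and carry the stated invariances, and that total minimality is invoked with the correct nonzero $n$ — but the essential leverage lies entirely in the proximal-orbit-density applied inside $R$.
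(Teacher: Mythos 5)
Your proof is correct and follows essentially the same route as the paper's: both identify the failure of injectivity of the projection $M \to X$ as the source of an off-diagonal pair in $Y \times Y$, apply POD to plant a graph $\Gamma_n$ inside the resulting closed invariant relation, deduce that $M$ is invariant under $\mathrm{id}_X \times S^n$, and invoke total minimality to conclude $M = X \times Y$. The only cosmetic difference is that you package the final steps via the auxiliary set $B$ and the fibres $M_x$, where the paper argues directly that $(S^n \times \mathrm{id}_X)M \cap M \neq \emptyset$ forces $(S^n \times \mathrm{id}_X)M = M$.
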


For the reader's convenience we reproduce the short proof:

\begin{proof}
Suppose $Y$ is not a factor of $X$ and let $M$ be a minimal subset of $Y \times X$. 
Since $X$ is not an extension of $Y$, there exist $y, y' \in Y$ with $y \not = y'$
and $x \in X$ such that $(y, x), (y', x)  \in M$. 
From the POD property it follows that for some $z \in X$ and $n \not=0$
the points $(y, z)$ and $(T^n y,z)$ are both in $M$. This implies that 
$(T^n \times \id_X) M \cap M \not=\emptyset$ and, as $M$ is minimal,
it follows that $(T^n \times \id_X) M = M$.
Finally, since $Y$ is totally minimal we deduce that $M = Y \times X$,
as required.
\end{proof}

We will strengthen this property for doubly minimal systems as follows:

\begin{thm}\label{DM}
If $(Y,S)$ is doubly minimal and $(X,T)$ is any
minimal system then the orbit closure of any point $(y,x) \in Y \times X$
is either all of $Y \times X$ or it is the graph 
$\Ga_\pi = \{(\pi(x),x) : x \in X\}$
of some factor map $\pi: X \to Y$.
\end{thm}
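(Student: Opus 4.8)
The plan is to pass to a minimal subset of $M:=\ol{\mathcal{O}_{S\times T}(y,x)}$, classify that subset, and then promote the classification to $M$ itself. Throughout I would use two facts about the doubly minimal system $(Y,S)$: being POD it is in particular totally minimal, so $(Y,S^m)$ is minimal for every $m\neq 0$; and by definition the only orbit closures of $S\times S$ in $Y\times Y$ are the graphs $\Ga_m=\{(S^m w,w):w\in Y\}$ and all of $Y\times Y$. Since $X$ and $Y$ are minimal, $M$ and each of its minimal subsets project onto both coordinates.

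First I would show that \emph{any} minimal $N\subseteq Y\times X$ projecting onto both factors is either $Y\times X$ or a graph $\Ga_\pi$. If $N$ is not a graph, some fibre $N_{x'}=\{w:(w,x')\in N\}$ contains two points $a\neq b$. The orbit closure $\ol{\mathcal{O}_{S\times S}(a,b)}$ is either some $\Ga_m$ (necessarily with $m\neq 0$, since $a\neq b$) or all of $Y\times Y$; in either case one can produce a point $(c,x'')\in N$ together with $(S^m c,x'')\in N$ for some $m\neq 0$. In the first case take $(c,x'')=(b,x')$; in the second, form $\ol{\mathcal{O}_{S\times S\times T}(a,b,x')}\subseteq Y\times Y\times X$, whose projection to the first two coordinates is all of $Y\times Y$ and whose two ``side'' projections land in $N$ (because the generating point does and $N$ is closed invariant), so a point lying over $\Ga_1$ furnishes the relation with $m=1$. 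Writing $g_m=S^m\times\id_X$, the points $(c,x'')$ and $g_m(c,x'')$ both lie in $N$, so the minimal sets $N$ and $g_mN$ meet and hence coincide; as $(Y,S^m)$ is minimal, the $S^m$-orbit of any point is dense in $Y$, so $g_mN=N$ forces every fibre of $N$ to be all of $Y$, i.e. $N=Y\times X$.

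Now let $N\subseteq M$ be a minimal subset. By the previous step either $N=Y\times X$, whence $M=Y\times X$, or $N=\Ga_\pi$ for a factor map $\pi:X\to Y$. In the latter case I would consider the equivariant map $\id_Y\times\pi:Y\times X\to Y\times Y$ (equivariance uses $\pi\circ T=S\circ\pi$). Its image on $M$ is $\ol{\mathcal{O}_{S\times S}(y,\pi(x))}$, again an orbit closure in $Y\times Y$, hence equal to some $\Ga_j$ or to $Y\times Y$. If it is $\Ga_j$, then $y=S^j\pi(x)$, so $(y,x)$ lies on the minimal graph $\Ga_{S^j\pi}$ of the factor map $S^j\pi$, and therefore $M=\Ga_{S^j\pi}$ — the graph alternative. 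If instead the image is all of $Y\times Y$, then for each $m\in\Z$ the preimage in $M$ of the minimal set $\Ga_m$ is nonempty (by surjectivity), closed and invariant, and contained in the minimal graph $\Ga_{S^m\pi}$; hence it equals $\Ga_{S^m\pi}$, so $\Ga_{S^m\pi}\subseteq M$ for every $m$. Consequently each fibre $M_\zeta$ contains $\{S^m\pi(\zeta):m\in\Z\}$, whose closure is $Y$ by minimality of $Y$; since $M_\zeta$ is closed, $M_\zeta=Y$ for all $\zeta$, that is, $M=Y\times X$.

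The main obstacle is precisely that $M$, being the orbit closure of a single point, need not be minimal, so the clean ``two minimal sets that meet must coincide'' argument is not available for $M$ directly. The device that resolves this is to recognize the image of $M$ under $\id_Y\times\pi$ as an orbit closure inside the doubly minimal square $Y\times Y$, so that double minimality can be invoked a second time, and then to observe that the minimal graphs $\Ga_m$ pull back to full graphs $\Ga_{S^m\pi}$ that sweep out all of $M$.
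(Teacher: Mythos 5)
Your argument is correct, and it reaches the conclusion by a route that differs from the paper's in its key step. The skeleton is the same: both proofs first handle the case where $Y$ is not a factor of $X$ by an FKS-type disjointness argument (you re-derive the classification of minimal subsets of $Y\times X$ rather than quoting \cite{FKS}, but the mechanism --- producing $(c,x'')$ and $(S^m c,x'')$ in the same minimal set and then invoking total minimality of $Y$ --- is exactly the one the paper reproduces), and both then split according to whether the orbit closure of $(y,\pi(x))$ in $Y\times Y$ is a graph $\Ga_j$ or all of $Y\times Y$, which is where double minimality enters. The divergence is in how the main case is finished. The paper argues by hand: it approximates an arbitrary $(u,v)$ by a point $(\pi(w),T^m w)$ on one of the graphs $\Ga_{\pi\circ T^m}$ (whose union is dense in $Y\times X$) and then realizes that point as an iterated limit $\lim_j\lim_i T^{k_j}T^{n_i}(y_0,x_1)$, using first the double transitivity of $(y_0,\pi(x_1))$ and then the minimality of $X$. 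You instead push $M=\ol{\mathcal{O}_{S\times T}(y,x)}$ forward under the equivariant map $\id_Y\times\pi$ onto $Y\times Y$, note that $M$ then meets each preimage $(\id_Y\times\pi)^{-1}(\Ga_m)=\Ga_{S^m\pi}$ in a nonempty closed invariant subset of a minimal graph, hence contains every $\Ga_{S^m\pi}$, and conclude since these graphs fill each fibre $M_\zeta$ with the dense orbit $\{S^m\pi(\zeta)\}$. This pullback argument is softer and avoids the explicit double-limit computation; its price is the preliminary classification of minimal subsets of $Y\times X$, which you need in order to produce the factor map $\pi$ in the first place and which the paper obtains directly from the quoted FKS theorem. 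Both proofs invoke total minimality and double minimality of $Y$ at the same two junctures, so the gain is one of economy in the analysis rather than of generality.
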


\begin{proof}
Let $Y$ be a doubly minimal system.
In particular $Y$ is weakly mixing and has the POD property.
Let $X$ be a minimal system.
By \cite{FKS} either $X$ and $Y$ are disjoint, or $Y$ is a factor of $X$. 
In the first case the product system $Y \times X$ is minimal.

So we now assume that there is a factor map $\pi : X \to Y$.
We consider an arbitrary point $(y_0, x_1) \in Y \times X$ and denote $y_1 = \pi(x_1)$.
We will denote the acting transformation on both $X$ and $Y$ by the letter $T$.

\vspace{.5cm}

{\bf Case 1:} $y_1 = T^n y_0$ for some $n \in \Z$.

In this case the orbit closure $\overline{\mathcal{O}_{T \times T}(y_0, x_1)}$
has the form 
$$
\Ga_{\pi \circ T^{-n}} = \{(\pi(x), T^n x) : x \in X\},
$$ 
and is isomorphic to $X$.

\vspace{.5cm}

{\bf Case 2:} $y_1 \not \in \mathcal{O}(y_0)$.

Recall that by double minimality we have in this case that 
\begin{equation*}
\overline{\mathcal{O}_{T \times T}(y_0, y_1)} = Y \times Y.
\end{equation*}
Also note that, as the union of the graphs $\bigcup_{n \in \Z} \Ga_n$,
where $\Ga_n = \{(T^n x, x) : x \in X\}$, is dense in $X \times X$, the 
union of the graphs $\bigcup \Ga_{\pi \circ T^{n}}$ is dense in $Y \times X$.

\vspace{.5cm}

Let $(u,v)$ be an arbitrary point in $Y \times X$ and fix an $\ep >0$.

(i) Choose a point $w \in X$ and $m \in \Z$ such that $(\pi(w), T^m w) \overset{\ep}{\sim} (u,v)$. 

(ii) Choose
a sequence $n_i \in \Z$ such that for some point $z \in X$ 
\begin{equation*}
T^{n_i}(y_0, x_1) \to (\pi(w), z),
\qquad {\text{with}} \qquad  \pi(z) = T^m \pi(w).
\end{equation*}

(iii) Choose a sequence $k_j \in \Z$ such that 
\begin{gather*}
T^{k_j}z  \to T^m w, \qquad 
{\text{whence}} \qquad 
T^{k_j} \pi(z)  = T^{k_j} T^m \pi(w)  \to T^m \pi(w),\\
 \text{and} \qquad  T^{k_j} \pi(w)  \to  \pi(w).
\end{gather*}

\vspace{.5cm}

Now
$$
\lim_j \lim_i T^{k_j} T^{n_i} (y_0, x_1) 
= \lim_j T^{k_j} (\pi(w), z) = (\pi(w), T^m w)  \overset{\ep}{\sim} (u, v).
$$
Since $\ep >0$ is arbitrary we conclude that$(u, v) \in \overline{\mathcal{O}_{T\times T}(y_0, x_1)}$,
hence $\overline{\mathcal{O}_{T\times T}(y_0, x_1)} = Y \times X$.
 \end{proof}

As a corollary of this theorem and the fact that 
there are weakly mixing doubly minimal
systems (\cite{K} and \cite{W}) we get a negative answer to Problem \ref{prob}.

First note that a minimal weakly mixing system does not admit a distal point.
One way to see this is via the fact that in a minimal weakly mixing
system $X$, for every point $x \in X$ there is a dense $G_\del$ subset $X_0 \subset X$
such that for every $x' \in X_0$ the pair $(x,x')$ is proximal; see
\cite[Theorem 9.12]{F}, or \cite{AK} for an even stronger statement.

\begin{thm}
There exists a minimal dynamical system $Y$ which is weakly mixing
(hence in particular does not have distal points)
yet it has the property that for every minimal system $X$
every pair $(y,x) \in Y \times X$ is recurrent.
\end{thm}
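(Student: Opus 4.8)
The plan is to take $Y$ to be a weakly mixing doubly minimal system, whose existence is guaranteed by \cite{K} and \cite{W}, and to deduce everything from Theorem \ref{DM}. Since $Y$ is minimal and weakly mixing, the discussion preceding the statement shows that $Y$ has no distal points, so the parenthetical claim is immediate; it remains only to establish recurrence of every pair $(y,x) \in Y \times X$ for an arbitrary minimal system $X$.

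Fix such an $X$ and a point $(y,x)$, and apply Theorem \ref{DM}. By that theorem the orbit closure $\ol{\mathcal{O}_{T\times T}(y,x)}$ is either the graph $\Ga_\pi = \{(\pi(x'),x') : x'\in X\}$ of a factor map $\pi : X \to Y$, or all of $Y\times X$. I would treat these two cases separately, the first being routine and the second carrying the one genuine subtlety.

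In the graph case the set $\Ga_\pi$ is $T\times T$-invariant and, via $x' \mapsto (\pi(x'), x')$, isomorphic to the minimal system $X$; hence it is itself minimal. Since every point of a minimal set is recurrent, $(y,x)$ is recurrent.

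In the remaining case $(y,x)$ is a transitive point of $Y\times X$, and here it need not lie in any minimal set: when $\pi$ is a proper factor map $Y\times X$ strictly contains $\Ga_\pi$ and so is not itself minimal. This is the step I expect to be the main obstacle, since the convenient appeal to minimality is no longer available. The way around it is to observe that $Y\times X$ is perfect: $Y$, being an infinite minimal system, has no isolated points, and hence neither does the product (any basic neighborhood of a point already contains a distinct point differing only in the $Y$-coordinate). Therefore, for every $N$, the forward tail $\{T^n(y,x) : n \geq N\}$ differs from the dense full orbit by only finitely many points and is still dense in the perfect space $Y\times X$. In particular $(y,x)$ is a limit of iterates $T^{n_i}(y,x)$ with $n_i \to \infty$, which is exactly the assertion that $(y,x)$ is recurrent. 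Together with the absence of distal points, this proves the theorem.
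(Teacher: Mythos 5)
Your argument for the case $\ol{\mathcal{O}_{T\times T}(y,x)}=Y\times X$ has a genuine gap, and it sits exactly at the sentence you flag as the key step. The forward tail $\{T^n(y,x):n\ge N\}$ does \emph{not} differ from the full orbit $\{T^n(y,x):n\in\Z\}$ by finitely many points: it omits the infinitely many iterates with $n<N$. What perfectness of $Y\times X$ actually yields is that the \emph{two-sided} tail $\{T^n(y,x):|n|\ge N\}$ is still dense, hence that $(y,x)=\lim T^{n_i}(y,x)$ for some $n_i$ with $|n_i|\to\infty$. That is recurrence for the $\Z$-action, but it is not the forward recurrence ($n_i\to+\infty$) used in \cite[Theorem 9.11]{F} and in the Haddad--Ott question \cite{HO}, and the implication ``dense two-sided orbit in a perfect space $\Rightarrow$ positively recurrent'' is false in general: a biinfinite sequence in the full shift whose backward half is transitive but whose forward half is eventually constant has dense $\Z$-orbit in a perfect space, yet its $\omega$-limit set is a single fixed point not equal to it. To close the gap you would need to show that the \emph{forward} orbit of $(y,x)$ is dense (equivalently $(y,x)\in\omega(y,x)$), which does not follow formally from the two-sided density supplied by Theorem \ref{DM}; one must either rerun the proof of Theorem \ref{DM} with one-sided orbits, using the one-sided form of double minimality, or argue separately that $\omega(y,x)=Y\times X$.

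For comparison: your overall route is the paper's (same example, same appeal to Theorem \ref{DM}, same treatment of the graph case via minimality of $\Ga_\pi$). The paper organizes the dichotomy through the disjointness theorem of \cite{FKS}, so that when $X$ and $Y$ are disjoint the whole product is minimal and every pair is uniformly forward recurrent; in the remaining case it quotes only the graph alternative of Theorem \ref{DM}, and thus also leaves implicit the sub-case where $Y$ is a factor of $X$ but the particular pair is transitive in the non-minimal product (which does occur, e.g.\ $X=Y$ and $y\notin\mathcal{O}(\pi(x))$). So your instinct that this transitive case is the one delicate point is correct and goes beyond what the paper writes down --- but the cofiniteness argument you give for it does not work as stated, and if ``recurrent'' is meant in the forward sense the repair is not merely cosmetic.
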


 \begin{proof}
Let $Y$ be a weakly mixing doubly minimal system and $X$ a minimal system.
By \cite{FKS} either $X$ and $Y$ are disjoint, or $Y$ is a factor of $X$. 
In the first case the product system $Y \times X$ is minimal and, in particular, 
every pair $(y,x)$ is recurrent.

In the second case we have, by Theorem \ref{DM},
$$
\overline{\mathcal{O}_{T\times T}(y, x)} = \Ga_\pi = \{(\pi(z),z) : z \in X\},
$$
for a factor map $\pi : X \to Y$. 
Again $(y,x) = (\pi(x),x)$ is recurrent and the proof is complete.
 \end{proof}

\vspace{.5cm}

\end{document}